\newcommand{\eop}{\qquad\hspace*{\fill}$\Box$}
\newtheorem{theorem}{Theorem}
\newtheorem{question}[theorem]{Question}
\newtheorem{corollary}[theorem]{Corollary}
\newtheorem{conjecture}[theorem]{Conjecture}
\newtheorem{observation}[theorem]{Observation}
\newtheorem{claim}[theorem]{Claim}
\def\cC{\mathcal{C}}
\def\cG{\mathcal{G}}
\def\cP{\mathcal{P}}
\def\cO{\mathcal{OP}}
\newenvironment{proof}{\begin{trivlist}\item[]{\bf Proof}\quad}%
  {\qquad\hspace*{\fill}\rule{1.2ex}{1.2ex}\end{trivlist}}
  {\qquad\hspace*{\fill}\rule{1.2ex}{1.2ex}\end{trivlist}}
\begin{document}

\title{{\bf Fire Containment in Planar Graphs}}

\author{Louis Esperet\hskip1pt\thanks{\,CNRS, Laboratoire G-SCOP, Grenoble,
    France; \texttt{louis.esperet@g-scop.fr}.}\hspace{15mm} Jan van den
  Heuvel\hskip1pt\thanks{\,Department of Mathematics, London School of
    Economics, London, UK; \texttt{jan@maths.lse.ac.uk}.}\\[2mm]
  Fr\'ed\'eric Maffray\hskip1pt\thanks{\,CNRS, Laboratoire G-SCOP,
    Grenoble, France;
    \texttt{frederic.maffray@g-scop.inpg.fr}.}\hspace{15mm} F\'elix
  Sipma\hskip1pt\thanks{\,Laboratoire G-SCOP, Grenoble, France;
    \texttt{felix.sipma@ens-lyon.org}.\vspace*{1mm}}}

\date{}
\maketitle

{\renewcommand{\thefootnote}{\relax}

  \footnotetext{This work was partially supported by ANR Project HEREDIA,
    under grant (\textsc{anr-10-jcjc-heredia}).\vspace*{1mm}}

  \footnotetext{Part of the research performed during a visit of JvdH to
    G-SCOP, Grenoble, supported by CNRS. JvdH likes to thank the members of
    G-SCOP for their hospitality.}}

\begin{abstract}
  \noindent
  In a graph $G$, a fire starts at some vertex. At every time step,
  firefighters can protect up to $k$ vertices, and then the fire spreads to
  all unprotected neighbours. The \emph{\mbox{$k$-surviving} rate}
  $\rho_k(G)$ of $G$ is the expectation of the proportion of vertices that
  can be saved from the fire, if the starting vertex of the fire is chosen
  uniformly at random. For a given class of graphs $\cG$ we are interested
  in the minimum value~$k$ such that for some constant $\epsilon>0$ and all
  $G\in\cG$, $\rho_k(G)\ge\epsilon$ (\emph{i.e.}, such that linearly many
  vertices are expected to be saved in every graph from~$\cG$).

  In this note, we prove that for planar graphs this minimum value is at
  most~4, and that it is precisely~2 for triangle-free planar graphs.

  \bigskip\noindent
  \emph{Keywords}: the Firefighter Problem; surviving rate; planar graphs.
\end{abstract}

\section{Introduction}

The Firefighter Problem in graphs was introduced by Hartnell in
1995~\cite{Har95}. In a graph $G$, a fire starts at time 0 at some vertex
$v$ of $G$. At every subsequent time step, the firefighters protect at most
$k$ vertices from the fire (this protection is permanent), and then the
fire spreads to all unprotected neighbours. This problem has been heavily
studied over the past decade; we refer the reader to a survey of Finbow and
MacGillivray~\cite{FM09} for a general overview, and to
\cite{D11,FKMR07,KM10} for specific algorithmic and complexity results.

We denote by $sn_k(G,v)$ the maximum number of vertices of $G$ that can be
saved from the fire if it starts at $v$. In general, the parameter
$sn_k(G,v)$ depends heavily on $v$. For instance, if $G$ is a star on $n$
vertices, $v$ is the centre, and $u$ is a leaf, then $sn_1(G,v)=1$ while
$sn_1(G,u)=n-1$. A good indicator of the robustness of a graph against a
random fire is the following parameter introduced by Cai and Wang in
2009~\cite{CW09}. They define the \emph{$k$-surviving rate} $\rho_k(G)$ of
a graph $G$ on $n$ vertices as $\dfrac1{n^2}\sum\limits_{v\in G}sn_k(G,v)$.
In other words, $\rho_k(G)$ is the expectation of the proportion of
vertices that can be saved from the fire if it starts randomly in $G$.

\medskip
For a family $\cC$ of graphs, by a slight abuse of notation we use
$\rho_k(\cC)$ to denote the infimum of $\rho_k(G)$ over all graphs
$G\in\cC$ with at least two vertices. We also define the \emph{firefighter
  number} $\mathit{ff}(\cC)$ of the family $\cC$ as the minimum integer $k$
such that $\rho_k(\cC)>0$. If no such value $k$ exists, we set
$\mathit{ff}(\cC)=+\infty$.

Let $\cP$, $\cP_g$, and $\cO$ denote, respectively, the set of planar
graphs, the set of planar graphs with girth (size of a shortest cycle) at
least $g$, and the set of outerplanar graphs. Cai and Wang~\cite{CW09}
proved that $\rho_1(\cO)\ge1/6$, and asked the following.

\begin{question}{\rm\cite[Problem 6.2]{CW09}}\\*
  What is the minimum $k$ such that $\rho_k(\cP)>\epsilon$ for some
  $\epsilon>0$?
\end{question}

\noindent
Using the notation introduced above, this is equivalent to asking for the
value of $\mathit{ff}(\cP)$. Such a constant is at least two, as shown by
the complete bipartite (planar) graph $K_{2,n-2}$: When there is only one
firefighter, only two vertices can be saved wherever the fire starts, hence
$\rho_1(K_{2,n-2})=2/n$.

Wang \emph{et al.}~\cite{WFW10} recently proved that
$\rho_1(\cP_9)\ge2/35$, and that if a graph $G$ is $d$-degenerate (and has
at least two vertices), $\rho_{2d-1}(G)\ge2/(5\,d)$. This implies that the
firefighter number of every proper minor-closed class of graphs is finite.
Wang \emph{et al.}\ also proved that $\rho_5(\cP)\ge2/15$, which implies
that $2\le\mathit{ff}(\cP)\le5$.

The main purpose of this note is to prove the following results.

\begin{theorem}\label{th:total}\mbox{}\\*
  (1)\quad For the class $\cP$ of planar graphs, we have
  $2\le\mathit{ff}(\cP)\le4$.

  \noindent
  (2)\quad For the class $\cP_4$ of triangle-free planar graphs, we have
  $\mathit{ff}(\cP_4)=2$.
\end{theorem}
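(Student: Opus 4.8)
\quad Both lower bounds are immediate, and in fact already noted: the complete bipartite graph $K_{2,n-2}$ is planar and triangle-free, and $\rho_1(K_{2,n-2})=2/n\to 0$, so $\mathit{ff}(\cP)\ge 2$ and $\mathit{ff}(\cP_4)\ge 2$. It remains to prove two upper bounds: that $\rho_4(G)\ge\epsilon$ for every planar $G$ and $\rho_2(G)\ge\epsilon$ for every triangle-free planar $G$, with $\epsilon>0$ an absolute constant. For both, the plan is to exhibit, in each such $G$ on $n$ vertices, a set $W\subseteq V(G)$ with $|W|\ge\alpha n$ for an absolute constant $\alpha>0$, such that whenever the fire starts in $W$ the firefighters can save at least $n/2$ vertices (for $n$ above an absolute threshold; for smaller $n$ there is nothing to prove, since $sn_k(G,v)\ge 1$ always). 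This suffices, since then
\[
  \rho_k(G)\;=\;\frac1{n^2}\sum_{v\in V(G)}sn_k(G,v)\;\ge\;\frac{|W|\cdot(n/2)}{n^2}\;\ge\;\frac{\alpha}{2}.
\]

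The construction of $W$ starts from sparsity. A planar graph has fewer than $3n$ edges, so at least $n/7$ of its vertices have degree at most $6$; a triangle-free planar graph has fewer than $2n$ edges, so at least $n/5$ have degree at most $4$. If $\deg(v)\le k$, then protecting $N(v)$ at the first step already saves $n-1$ vertices, which handles $k\ge 6$ for $\cP$ and $k\ge 4$ for $\cP_4$. To bring $k$ down to $4$ and to $2$, one step of protection no longer suffices and one looks a bounded number of steps ahead. The basic move: from the fire at $v$, protect all but one neighbour at step~$1$, let the remaining neighbour $u$ burn, and play on against the fire at $\{v,u\}$; when $G$ is triangle-free the neighbours of $u$ distinct from $v$ are automatically distinct from those of $v$, so if $\deg(v),\deg(u)\le k+1$ the fire is confined to $\{v,u\}$ after step~$2$. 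Iterating this, the target is that, started at a vertex of $W$, the fire can be forced within a bounded number of steps into a set $F\ni v$ of sublinear size whose outer neighbourhood $N(F)\setminus F$ the $k$ firefighters can seal off before the fire escapes $F$; this saves $n-|F|\ge n/2$ vertices.

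The real work is twofold, and this is where I expect the difficulty to lie. First, the ``channelling'' above only succeeds while the neighbourhoods the fire meets stay sparse; for a vertex deep inside a grid-like region (the square grid is triangle-free and $4$-regular, a large patch of the triangular lattice is planar and locally $6$-regular) there is no small set $F$ with small boundary around $v$, and the fire must instead be contained by \emph{pinching its front} with the help of a plane embedding: $k$ well-chosen protections at the first step leave only a short burning front, and thereafter planarity --- sparsity, together with the exclusion of $K_{3,3}$ --- keeps that front short enough that $k$ protections per step squeeze it shut after boundedly many steps, burning only a sublinear set. Second, one must show that the set $W$ of vertices admitting one of these strategies has size at least $\alpha n$; equivalently, that the ``bad'' vertices, surrounded closely enough by enough high-degree obstructions for an uncontained front to outrun the firefighters, number at most $(1-\alpha)n$. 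For this I would run a discharging argument on a plane embedding of $G$: give each vertex the charge $\deg(v)-6$ (for $\cP$) or $\deg(v)-4$ (for $\cP_4$), which by Euler's formula sums to a negative quantity, and redistribute it from high-degree vertices to nearby ones so that every bad vertex ends with nonnegative charge; a negative total then forces the bad vertices to fall short of $n$ by a constant fraction. Finally, feeding $|W|\ge\alpha n$ and $sn_k(G,v)\ge n/2$ for $v\in W$ into the displayed inequality gives $\mathit{ff}(\cP)\le 4$ and $\mathit{ff}(\cP_4)\le 2$; with the lower bound $\mathit{ff}(\cP_4)\ge 2$ this yields $\mathit{ff}(\cP_4)=2$, completing the proof of Theorem~\ref{th:total}.
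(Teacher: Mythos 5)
Your skeleton matches the paper's at the top level (lower bound from $K_{2,n-2}$; split the vertices into ``good'' and ``bad''; discharging to show the good ones form a constant fraction; average over firestarts), but the proposal has a genuine gap exactly at the point where the numbers $4$ and $2$ are decided: the containment strategies inside grid-like regions. You assert that when the firestart sits deep in a square-grid patch (triangle-free case, $k=2$) or a triangular-lattice patch (planar case), planarity ``keeps the front short enough that $k$ protections per step squeeze it shut after boundedly many steps.'' That is not an argument, and as a heuristic it proves too much: the same reasoning would suggest one firefighter per step can contain a fire in the infinite square grid, which is false. Whether a fixed number of firefighters can contain a fire in such a lattice, losing only a bounded number of vertices, is a delicate combinatorial fact requiring an explicit strategy; the paper supplies exactly this (an explicit strategy for the locally $6$-regular triangulation using $4$ firefighters at the first step and $3$ thereafter, and Fogarty's strategy saving all but $18$ vertices in the square grid for the $2$-firefighter case). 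Relatedly, your plan to burn a ``sublinear'' set $F$ and then seal $N(F)\setminus F$ cannot work as stated: with a bounded number of firefighters the sealing must be completed within boundedly many steps, so the saved-all-but-$|F|$ strategies must have $|F|=O(1)$; controlling a front for a growing number of steps is precisely what bounded-lookahead arguments cannot do (the paper's concluding remarks make this point about hexagonal grids and $k=2$).

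The second, smaller gap is that ``low degree'' alone does not delimit the good set, and the discharging is only gestured at. In the triangle-free case a degree-$3$ vertex surrounded by high-degree vertices need not be containable by two firefighters with bounded loss; this is why the paper must catalogue a list of good local configurations (adjacency to low-degree vertices, $4$-opposite vertices across quadrilateral faces, the degree-$5$ and degree-$6$ patterns, and the grid observation for degree $4$), prove structural claims about the remaining bad vertices ($Y_3$, $Y_{5,3}$, $Y_6$, faces), and then design charge rules (on vertices \emph{and} faces, with charge $d-4$) that exploit exactly those claims. Your proposal names the right tool (discharging from $d(v)-6$, resp.\ $d(v)-4$, with negative total) but does not identify which vertices receive charge, from whom, or why every bad vertex ends up nonnegative, and it omits face charges entirely in the triangle-free case, where they are essential. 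So the plan is directionally right, but the two pieces you defer --- explicit bounded-loss containment strategies in the lattice-like neighbourhoods, and a discharging scheme tailored to the specific configurations those strategies certify --- are the substance of the proof, not routine details.
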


\noindent
The proofs of these results can be found in Sections~\ref{sec:pla}
and~\ref{sec:bip}. For the planar case, we indeed prove a much stronger
theorem: we show that if 4 firefighters are available at the first step,
and 3 firefighters at each subsequent step, then the surviving rate of
every planar graph is bounded by a positive constant.

The main idea of the two proofs is to partition the vertices of a graph $G$
into two carefully chosen sets $X$ and~$Y$. If the fire starts at a vertex
of $X$, we will show that it can be quickly contained (saving all vertices
but a constant number). If the fire starts at a vertex of $Y$, we will do
nothing and let everything burn. We could easily save a couple of vertices
by protecting them, but this would only make the computation harder (and
only improve the constants). Then we will show that $|Y|\le c\,|X|$ for
some constant $c>0$, concluding the proof.

We illustrate this technique by proving the easy result below. The proofs
in Sections~\ref{sec:pla} and~\ref{sec:bip} are more involved and use the
well-known discharging method for planar graphs. The novel aspect of our
approach is that we use the discharging method not just to prove that a
particular configuration must exist at least once, but that it must exist
\emph{many} times.

\begin{theorem}\label{th:girth5}\mbox{}\\*
  Any planar graph $G$ with girth at least~5 and at least two vertices
  satisfies $\rho_2(G)\ge1/22$.
\end{theorem}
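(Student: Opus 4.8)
The plan is to follow the strategy described in the introduction: partition $V(G)$ into two sets $X$ and $Y$ such that (a) if the fire starts at any vertex of $X$, at most a constant number of vertices burn when the firefighters play well with $k=2$, and (b) $|Y| \le c\,|X|$ for an absolute constant $c$. Given such a partition, the surviving rate is at least
\[
  \rho_2(G) \;\ge\; \frac{1}{n^2}\sum_{v\in X}(n - O(1)) \;\ge\; \frac{|X|}{n}\cdot\frac{n-O(1)}{n},
\]
and since $|X| \ge n/(c+1)$ this gives $\rho_2(G) \ge \frac{1}{c+1} - o(1)$; one then checks the small cases by hand to replace $o(1)$ by an honest constant, arriving at the claimed $1/22$. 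So the real content is choosing $X$ and $Y$.

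**What goes into $X$.** The natural candidates for $X$ are vertices $v$ from which the fire, starting at $v$, can be contained almost immediately with two firefighters per turn. Two firefighters can always save everything but a constant if, say, $v$ has degree at most $2$ (protect both neighbours at step~1, done), or more generally if the fire can be walled off within one or two steps — e.g. if $v$ has degree~$3$ and one of its neighbours has low degree, one can protect two of $v$'s neighbours and then deal with the at most $O(1)$ vertices the fire reaches through the third. I would make $X$ the set of vertices $v$ such that $v$ together with a bounded-size neighbourhood around $v$ has a small edge-boundary, made precise so that each such $v$ admits a containment strategy losing at most a fixed constant (for girth~$\ge 5$ the numbers should be small — losing at most, say, a handful of vertices). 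Everything else goes into $Y$; we make no attempt to fight a fire originating in $Y$.

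**Bounding $|Y|$ by discharging.** The heart of the argument is showing $|Y| \le c\,|X|$. This is where the girth-$5$ hypothesis and Euler's formula come in. Since $G$ is planar with girth $\ge 5$, we have $|E(G)| \le \tfrac{5}{3}(|V(G)| - 2)$, so the average degree is below $\tfrac{10}{3}$, and in particular $G$ has many vertices of degree $\le 3$; with girth $5$ one also controls how low-degree vertices cluster. I would assign an initial charge of $\deg(v) - \tfrac{10}{3}$ to each vertex (and the analogous face charges, or work purely with a vertex version), so the total charge is negative. Then I design discharging rules that move charge from $Y$-vertices toward $X$-vertices, and argue that after discharging every $Y$-vertex has nonnegative charge while every $X$-vertex has charge at least some $-c'$ bounded below; summing, $-c'|X| \le \sum \mathrm{ch}(v) < 0$ forces... — more carefully, the bookkeeping should be arranged so that each $X$-vertex ends up ``paying for'' at most a bounded number of $Y$-vertices, directly yielding $|Y| \le c|X|$ with an explicit $c$ (the constant $21$ in the final $1/22$ presumably comes out of optimizing this count). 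The main obstacle is exactly this: defining $X$ cleanly enough that a vertex of $X$ genuinely has a constant-loss containment strategy, while still making $X$ large enough — i.e. making $Y$ ``locally sparse'' around every vertex — so that the discharging closes with a good constant $c$. Getting those two requirements to be simultaneously satisfiable, and then tuning the rules to extract the concrete bound, is where the work lies; the surviving-rate arithmetic and the finite-case check at the end are routine.
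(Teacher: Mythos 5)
Your overall plan (partition $V(G)$ into $X$ and $Y$, contain fires starting in $X$ with constant loss, show $|Y|\le c\,|X|$, then do the surviving-rate arithmetic and a small-case check) is exactly the paper's plan, but the two steps that carry all the mathematical content are left as declared intentions rather than proved. First, $X$ is never actually defined: ``vertices whose bounded neighbourhood has small edge-boundary, made precise so that a containment strategy exists'' is not a definition, and no containment strategy is exhibited or verified. The paper's choice is completely concrete: $X$ consists of the vertices of degree at most~$2$ together with the degree-$3$ vertices having a neighbour of degree at most~$3$; for the latter one protects the two other neighbours of $v$ in step one, lets the fire reach the low-degree neighbour $u$, and protects $u$'s two remaining neighbours in step two, losing at most $2$ vertices. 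Second, and more seriously, you do not establish $|Y|\le c\,|X|$; you propose a discharging scheme with initial charge $d(v)-\tfrac{10}{3}$ but give no rules and explicitly defer the verification (``where the work lies''). The genuinely nontrivial point, which your sketch does not identify, is how to bound the degree-$3$ vertices \emph{all} of whose neighbours have degree at least $4$ (the set $Y_3$): these admit no obvious containment strategy and are not controlled by the average-degree bound alone. Your remark that girth~$5$ ``controls how low-degree vertices cluster'' points the wrong way -- clustering of low-degree vertices is the easy, favourable case; the danger is degree-$3$ vertices isolated among high-degree ones.

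The paper handles this with a short counting argument that needs no discharging at all: the subgraph $H$ induced by the edges between $Y_3$ and $Y_4$ (vertices of degree $\ge 4$) is bipartite, and since $G$ has girth at least $5$, $H$ has girth at least $6$, hence average degree less than $3$; as $H$ has exactly $3\,y_3$ edges this forces $y_3\le y_4$. Combining with the bound $3\,x_3+3\,y_3+4\,y_4\le\tfrac{10}{3}\,n$ coming from average degree below $\tfrac{10}{3}$ yields $y_3+y_4\le 20\,(x_2+x_3)$, which is the required $|Y|\le c\,|X|$ with $c=20$ and leads to the constant $1/22$. Your discharging route could conceivably be made to work, but as written the proposal contains no argument for either of the two essential claims, so it does not constitute a proof.
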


\begin{proof}
  From Euler's formula, it is easy to deduce that planar graphs with girth
  at least~5 have average degree less than 10/3; while planar graphs with
  girth at least~6 have average degree less than 3.

  Let $X_2$ and $Y_4$ be the set of vertices of $G$ of degree at most 2 and
  at least 4, respectively. Let $X_3$ be the set of vertices of degree 3
  with a neighbour of degree at most 3, and let $Y_3$ be the set of
  vertices of degree 3 not in $X_3$. We use $x_2,x_3,y_3,y_4$ to denote the
  cardinality of the sets $X_2,X_3,Y_3,Y_4$, respectively. Let~$n$ be the
  number of vertices in~$G$.

  If the fire starts at $v\in X_2$, we protect its two neighbours, saving
  $n-1$ vertices. Consider a vertex $v\in X_3$, and let $u$ be its
  neighbour of degree at most~3. If the fire starts at $v$, we first
  protect its two neighbours distinct from $u$. The fire then reaches $u$,
  and we protect the two neighbours of $u$ distinct from $v$, saving $n-2$
  vertices. If the fire starts at a vertex from $Y_3$ or $Y_4$, we do
  nothing.

  Since for a fire that starts at a vertex from $X_2\cup X_3$ we can save
  at least $n-2$ vertices, we obtain for the 2-surviving rate
  \begin{equation}\label{eq:1}
    \rho_2(G)\:=\:\frac1{n^2}\sum_{v\in G}sn_2(G,v)\:\ge\:
    \frac1{n^2}\cdot(x_2+x_3)\,(n-2)\:=\:
    \frac{n-2}{n}\cdot\frac{x_2+x_3}{x_2+x_3+y_3+y_4}.
  \end{equation}

  Consider the subgraph $H$ of $G$ induced by the edges with one end in
  $Y_3$ and the other in $Y_4$. This graph has at most $y_3+y_4$ vertices
  and precisely $3\,y_3$ edges. Since $H$ is bipartite and $G$ has girth at
  least~5, $H$ has girth at least~6. Hence, its average degree is less than
  3 and we have $6\,y_3\le3\,(y_3+y_4)$, implying that $y_3\le y_4$.

  Since the average degree in~$G$ is less than 10/3,
  $3\,x_3+3\,y_3+4\,y_4\le\frac{10}3\,(x_2+x_3+y_3+y_4)$. Using that
  $y_3\le y_4$, this implies $y_4\le10\,x_2+x_3$, and hence
  $y_3+y_4\le20\,x_2+2\,x_3\le20\,(x_2+x_3)$. As a consequence, we obtain,
  using~\eqref{eq:1}:
  $$\rho_2(G)\:\ge\:\frac{n-2}{n}\cdot\frac{x_2+x_3}{x_2+x_3+y_3+y_4}\:
  \ge\:\frac{n-2}{n}\cdot\frac{1}{1+\frac{y_3+y_4}{x_2+x_3}}\:\ge\:
  \frac{n-2}{21\,n}\hskip1pt.$$
  If $n\ge44$, we obtain $\rho_2(G)\ge1/22$. Otherwise, if $G$ has only two
  vertices, the vertex distinct from the firestart can be saved, while if
  $G$ has $3\le n\le44$ vertices, at least $2/44=1/22$ of the vertices can
  be saved. So in all cases we have $\rho_2(G)\ge1/22$.
\end{proof}

\noindent
Theorem~\ref{th:girth5} has the following immediate consequence.

\begin{corollary}\mbox{}\\*
  For the class $\cP_5$ of planar graphs with girth at least~5, we have
  $1\le\mathit{ff}(\cP_5)\le2$.
\end{corollary}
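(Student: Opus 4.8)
The plan is to read off both inequalities essentially for free; neither requires new machinery.

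For the upper bound $\mathit{ff}(\cP_5)\le 2$, I would simply invoke Theorem~\ref{th:girth5}: it guarantees $\rho_2(G)\ge 1/22$ for every planar graph $G$ of girth at least $5$ with at least two vertices, so the infimum over the class satisfies $\rho_2(\cP_5)\ge 1/22>0$. By the definition of the firefighter number as the least integer $k$ with $\rho_k(\cP_5)>0$, this already forces $\mathit{ff}(\cP_5)\le 2$, with nothing more to check.

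For the lower bound $\mathit{ff}(\cP_5)\ge 1$, I would verify that $k=0$ is not a valid choice, i.e.\ that $\rho_0(\cP_5)$ is not positive. The reason is that with no firefighters the fire is never contained: in any connected graph $G$ on $n$ vertices we have $sn_0(G,v)=0$ for every vertex $v$, hence $\rho_0(G)=0$. Since $\cP_5$ contains connected graphs on at least two vertices — for instance the single edge $K_2$, or any path, all of which vacuously have girth at least $5$ — taking the infimum over the class yields $\rho_0(\cP_5)=0$. Thus $0$ does not satisfy $\rho_k(\cP_5)>0$, so $\mathit{ff}(\cP_5)\ge 1$. (If one adopts the convention that $k$ ranges over positive integers, this bound is of course immediate anyway.)

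There is no genuine obstacle here: the corollary is a one-line consequence of Theorem~\ref{th:girth5} together with the triviality above. The only substantive point it leaves open — and does not claim — is whether $\mathit{ff}(\cP_5)$ actually equals $1$ or $2$; resolving that would require either exhibiting a family of girth-$5$ planar graphs with $\rho_1\to 0$, or strengthening the discharging argument behind Theorem~\ref{th:girth5} so that a single firefighter suffices, and that is where the real work would lie in any sharper statement.
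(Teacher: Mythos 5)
Your argument is correct and is exactly how the paper treats this statement: the corollary is stated as an immediate consequence of Theorem~\ref{th:girth5} (giving $\rho_2(\cP_5)\ge 1/22>0$, hence $\mathit{ff}(\cP_5)\le 2$), with the lower bound $\mathit{ff}(\cP_5)\ge 1$ being the triviality that zero firefighters save nothing in a connected graph. Nothing is missing.
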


\section{Planar graphs}\label{sec:pla}

In this section we prove the following theorem.

\begin{theorem}\label{th:pla}\mbox{}\\*
  Assume 4 firefighters are given at the first step, and then 3 at each
  subsequent step. Then the firefighters have a strategy such that every
  planar graph has surviving rate at least 1/2712.
\end{theorem}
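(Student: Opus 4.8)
The plan is to follow the template established in the proof of Theorem~\ref{th:girth5}, but with a more delicate partition of the vertex set adapted to general planar graphs (which can have triangles and hence average degree approaching~6). First I would partition $V(G)$ into a ``good'' set $X$ and a ``bad'' set $Y$, where $X$ consists of vertices near which the fire can be contained in a bounded number of steps using the budget of $4$ firefighters initially and $3$ thereafter, and $Y$ is everything else. For a vertex $v\in X$ we save all but a constant number of vertices, so exactly as in~\eqref{eq:1} we get $\rho(G)\ge\frac{n-c}{n}\cdot\frac{|X|}{|X|+|Y|}$, and the whole theorem reduces to proving a linear bound $|Y|\le c'\,|X|$ for absolute constants $c,c'$; the constant $1/2712$ then falls out of the arithmetic (with a separate trivial argument for small $n$).

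The heart of the matter is choosing $X$ and establishing the containment strategy. The natural candidates for $X$ are low-degree vertices together with vertices having a low-degree neighbourhood: for instance, degree-$\le 3$ vertices, degree-$4$ vertices all of whose neighbours have small degree, and degree-$5$ vertices sitting in a suitably sparse local configuration (so that the $4+3+3+\cdots$ budget can wall off the fire before it escapes into the high-degree part of the graph). If the fire starts at such a $v$, the strategy protects all neighbours of $v$ at step~1 (this is where $4$ firefighters are needed, since $v$ may have degree up to $4$ or even $5$ if one neighbour is left to burn), then at each subsequent step protects the ``frontier'' of the at most a few vertices currently on fire; I must check that $3$ firefighters per step suffice to kill off the (constant-size) set of burning vertices within a bounded number of rounds, so that only $O(1)$ vertices are lost. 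This requires the configurations put into $X$ to be genuinely ``locally degenerate'' in a quantitative sense.

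Next I would set up the discharging argument to prove $|Y|\le c'\,|X|$. Assign each vertex $v$ charge $d(v)-6$ (so that by Euler's formula the total charge is $-12 < 0$), choose discharging rules that move charge from low-degree vertices to their high-degree neighbours, and verify that after discharging every vertex in $Y$ has nonnegative charge. Since the total charge is negative, the deficit must be absorbed by vertices in $X$, and by bounding how much charge any single $X$-vertex can send out (a local, finite case check), one obtains $|Y| = O(|X|)$. As the excerpt emphasises, the novelty is that discharging is used not to exhibit one reducible configuration but to show the ``light'' set $X$ is a constant fraction of all vertices; concretely this means the rules and the nonnegativity check at $Y$-vertices should be designed so that {\it every} vertex not in $X$ is happy.

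I expect the main obstacle to be the simultaneous design of the partition and the strategy: the set $X$ must be large enough that the discharging argument forces $|Y| = O(|X|)$ (which pushes toward putting many degree-$4$ and degree-$5$ vertices into $X$), yet small/structured enough that from every $X$-vertex the fire really can be contained with the $4,3,3,\dots$ budget in $O(1)$ steps (which pushes the other way). Getting a clean menu of local configurations that satisfies both constraints — and then a discharging rule set that certifies nonnegativity at every $Y$-vertex while bounding outgoing charge at $X$-vertices — is the crux; the rest (the averaging in~\eqref{eq:1}-style inequality, the small-$n$ boundary case, and extracting the explicit constant $1/2712$) is routine bookkeeping.
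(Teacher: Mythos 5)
Your overall template (partition into $X$ and $Y$, contain the fire from $X$-vertices losing only $O(1)$ vertices, then a discharging count to show $|Y|\le c'\,|X|$) is the paper's template, but two essential pieces are missing and one of them is a genuine logical gap. First, the counting step as you state it does not work: if you only verify that every $Y$-vertex has \emph{nonnegative} charge after discharging while every $X$-vertex has charge bounded below by $-C$, then summing gives $-12\ge -C\,|X|$, i.e.\ only $|X|\ge 12/C$ (the set $X$ is nonempty), and nothing at all about $|Y|$ versus $|X|$ --- $Y$ could be almost everything with all its charges exactly $0$. What is actually needed, and what the paper proves, is a \emph{strictly positive} uniform bound $\sigma_2(v)\ge\alpha>0$ for every $v\in Y$, so that $-12\ge(-3-93\,\alpha)|X|+\alpha|Y|$ forces $|Y|\le(93+3/\alpha)|X|$. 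This is exactly the ``configuration occurs many times'' twist announced in the introduction, and your sketch does not contain it.

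Second, and relatedly, you never address degree-$6$ vertices, which is where the real difficulty sits: after reducing to a triangulation the charge is $d(v)-6$, so degree-$6$ vertices carry charge $0$ and can form arbitrarily large triangular-lattice patches with no low-degree vertex anywhere near them; no rule ``moving charge from low-degree vertices to their high-degree neighbours'' (your stated direction, which in any case flows the wrong way for the vertices that need a boost, such as degree-$5$ vertices surrounded by degree-$\ge7$ vertices) can give such vertices positive charge. The paper resolves this with a structural step your plan lacks: if a degree-$6$ vertex has no vertex of degree $\ne 6$ within distance $3$ through degree-$6$ vertices, then its neighbourhood is a (triangulated) hexagonal grid and the $4,3,3,\dots$ budget contains the fire losing at most $6$ vertices (Observation~\ref{obs:hex} and Figure~\ref{fig:gridhex}), so such vertices belong to $X_6$; every remaining $Y_6$-vertex then pulls a small charge $\alpha$ along a path of length at most $3$ from a vertex of degree $\ne6$, and the number of such requests per donor is bounded locally (at most $31\,d(v)$), which is what keeps $X$-vertices only boundedly negative. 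Without an analogue of this containment-in-the-grid argument and the accompanying bounded-radius charge transfer, the partition you propose (degree $\le 3$, some degree-$4$, some degree-$5$ vertices) cannot certify $|Y|=O(|X|)$. (Minor further points: reducing to a maximal planar graph is what makes the degree-$\le4$ and degree-$5$ containment work --- with $4$ initial firefighters \emph{every} degree-$\le4$ vertex is good, and for a degree-$5$ vertex with a neighbour $u$ of degree $\le6$ the triangulation guarantees $u$ has only three unprotected neighbours left --- so your restriction of $X$ to degree-$4$ vertices with small-degree neighbours is unnecessarily weak.)
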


\begin{proof}
  We can assume that $G$ is a maximal planar graph (hence a planar
  triangulation), since adding edges to the graph can only make things more
  difficult for the firefighters. Hence, $G$ has minimum degree at least 3.
  For $3\le d\le6$, let $X_d$ be the set of vertices $v$ of degree~$d$ so
  that if the fire starts at $v$, the firefighters have a strategy that
  saves at least $|V(G)|-6$ vertices; the other vertices of degree~$d$ form
  the set~$Y_d$. For $d\ge 7$, $Y_d$ is the set of all vertices of
  degree~$d$. We set $X=\bigcup_{3\le d\le6}X_d$ and
  $Y=\bigcup_{d\ge3}Y_d$.

  Note that every vertex $v$ of degree $3\le d\le4$ is in $X_d$, since
  placing the firefighters on~$v$'s neighbours saves all the vertices
  except $v$. Hence, $Y_3$ and $Y_4$ are both empty. Also observe that if a
  vertex $v$ of degree~5 has a neighbour $u$ of degree at most 6, then $v$
  is in~$X_5$: first place four firefighters on the neighbours of $v$
  distinct from $u$, the fire then spreads to~$u$. Since $G$ is maximal
  planar only three unprotected neighbours of $u$ remain, which can be
  protected by three firefighters in the next step.

  We now make a small observation about the sets $X_6$ and $Y_6$. The
  \emph{length} of a path is the number of edges on the path.

  \begin{observation}\label{obs:hex}
    For every vertex $v\in Y_6$, there is a path of length at most 3
    connecting $v$ and a vertex $u$ of degree distinct from 6, and such
    that all the internal vertices on the path have degree precisely 6.
  \end{observation}

  \noindent
  Assume this is not the case. Then the subgraph of $G$ induced by the
  vertices at distance at most 3 from $v$ is the induced subgraph of a
  hexagonal grid. In this case, Figure~\ref{fig:gridhex} depicts a strategy
  for the firefighters saving all the vertices except at most six (which
  contradicts the fact that $v\in Y_6$). The figure should be read as
  follows: the fire starts at the squared vertex labelled 0, then the
  firefighters protect the circled vertices labelled 1, the fire spreads to
  all the squared vertices labelled 1, the firefighters protect the circled
  vertices labelled 2, and so on.\eop

  \smallskip
  \begin{figure}[ht]
    \centering
      \includegraphics[scale=0.8]{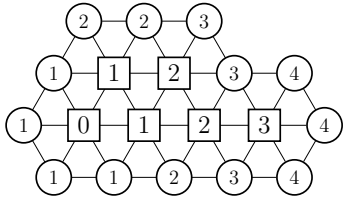}
      \caption{A strategy saving at least $n-6$ vertices if the
        neighbourhood of the firestart is a hexagonal
        grid.}\label{fig:gridhex}
  \end{figure}

  \bigskip\noindent
  For a planar graph with vertex set~$V$, edge set~$E$ and face set~$F$,
  Euler's formula gives $|V|-|E|+|F|=2$. For simple maximal planar graphs,
  it is well known that this is equivalent to
  $\sum\limits_{v\in V}(d(v)-6)=-12$. We interpret this by giving each
  vertex~$v$ an initial charge $\sigma_1(v)=d(v)-6$. We redistribute this
  initial charge according to the following rules. Here the value
  of~$\alpha$ will be determined later.

  \medskip\noindent
  (R1)\quad Each vertex of degree at least~7 gives a charge 1/4 to each of
  its neighbours from~$Y_5$.

  \smallskip\noindent
  (R2)\quad For each vertex $v\in Y_6$ we choose one vertex~$u$ with
  $d(u)\ne6$ and $\mathop{dist}(v,u)\le3$ (using
  Observation~\ref{obs:hex}); this vertex~$u$ gives a charge~$\alpha$
  to~$v$.

  \medskip
  The charge obtained after applying the rules~(R1) and~(R2) is denoted by
  $\sigma_2(v)$, $v\in V$. Note that we have
  $\sum\limits_{v\in V}\sigma_2(v)=\sum\limits_{v\in V}\sigma_1(v)=-12$.

  \begin{observation}\label{obs:R1}
    A vertex~$v$ with $d(v)\ge7$ has at most
    $\bigl\lfloor\tfrac12\,d(v)\bigr\rfloor$ neighbours in~$Y_5$.
  \end{observation}

  \noindent
  This follows directly, since if~$v$ has a neighbour~$u$ in~$Y_5$, then
  the two common neighbours of~$u$ and~$v$ must have degree at least~7 as
  well (otherwise~$u$ is in~$X_5$).\eop

  \begin{claim}\label{cl:ch}
    There is a constant $\alpha>0$ such that for every $v\in X$ we have
    $\sigma_2(v)>-3-93\,\alpha$; while for every $v\in Y$ we have
    $\sigma_2(v)\ge\alpha$.
  \end{claim}

  \noindent
  To prove the claim, we first estimate how often a vertex~$v$ with
  $d(v)\ne6$ can give a charge~$\alpha$ according to~(R2). As a very crude
  upper bound, this is at most the number of paths of length at most~3,
  starting in~$v$, and whose internal vertices all have degree exactly~6.
  This number is clearly at most $d(v)\cdot(1+5+25)=31\,d(v)$.

  Each vertex~$v$ of degree~3 gives at most $3\times31$ times a charge
  according to~(R2). Since $\sigma_1(v)=-3$, this gives
  $\sigma_2(v)\ge-3-93\,\alpha$. Similarly, for a vertex~$v$ of degree~4 we
  have $\sigma_2(v)\ge-2-124\,\alpha$; while for $v\in X_5$ we have
  $\sigma_2(v)\ge-1-155\,\alpha$. Finally, for a vertex $v\in X_6$ we have
  $\sigma_2(v)=\sigma_1(v)=0$.

  From rule~(R2) it follows that if $v\in Y_6$, then
  $\sigma_2(v)=\sigma_1(v)+\alpha=\alpha$.

  For a vertex~$v$ with degree $d(v)\ge7$, we can estimate, using the
  observations above,
  \[\sigma_2(v)\:\ge\:(d(v)-6)-
  \bigl\lfloor\tfrac12\,d(v)\bigr\rfloor\cdot1/4-31\,d(v)\,\alpha.\]
  If $d(v)=7$, this gives $\sigma_2(v)\ge1/4-217\,\alpha$; and if
  $d(v)\ge8$, we have the estimate
  $\sigma_2(v)\ge d(v)\cdot(7/8-31\,\alpha)-6$.

  We see that $\sigma_2(v)\ge\alpha>0$ for all $v\in Y$ if we can choose
  $\alpha>0$ such that $1/4-217\,\alpha\ge\alpha$ and
  $d\cdot(7/8-31\,\alpha)-6\ge\alpha$ for all $d\ge8$. It is easy to check
  that $\alpha=1/872$ will do the job. That value will also guarantee that
  $\sigma_2(v)>-3-93\,\alpha$ for all $v\in X$, completing the proof of the
  claim.\eop

  \bigskip\noindent
  The claim means that
  $-12=\sum\limits_{v\in V}\sigma_2(v)\ge(-3-93\,\alpha)\,|X|+\alpha\,|Y|$.
  This gives $|Y|\le(93+3/\alpha)\,|X|=2709\,|X|$. So the surviving rate of
  a graph on $n=|X|+|Y|$ vertices with this strategy is at least
  \[\frac{n-6}{n}\cdot\frac{|X|}{|X|+|Y|}\:>\:
  \frac{n-6}{n}\cdot\frac{|X|}{2710\,|X|}\:=\:\frac{n-6}{2710\,n}\hskip1pt.\]
  So if $n\ge10846$, the surviving rate is at least 1/2712. On the other
  hand, if $2 \le n<10846$, then we still can save at least $\min(4,n-1)$
  vertices, hence the surviving rate in that case is still at least
  $4/10846>1/2712$.
\end{proof}

\noindent
Theorem~\ref{th:pla} gives the upper bound on $\mathit{ff}(\cP)$ in
Theorem~\ref{th:total}\,(1) and the lower bound follows from the graph
$K_{2,n}$ as considered earlier.

\section{Triangle-free planar graphs}\label{sec:bip}

\begin{theorem}\label{th:bip}\mbox{}\\*
  Every triangle-free planar graph $G$ with at least two vertices satisfies
  $\rho_2(G)\ge1/723636$.
\end{theorem}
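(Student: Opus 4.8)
The plan is to follow the scheme of the proofs of Theorems~\ref{th:girth5} and~\ref{th:pla}. As deleting edges only helps the firefighters, we may assume $G$ is edge-maximal among triangle-free planar graphs; after fixing a plane embedding, every face then has length at least~$4$, and by triangle-freeness any two neighbours of a vertex are non-adjacent, so whenever the fire reaches a vertex $u$, all neighbours of $u$ other than $u$'s already-burnt predecessors are ``new'' (unburnt and, a priori, unprotected). We give each vertex $v$ an initial charge $\sigma_1(v)=d(v)-4$ and each face $f$ a charge $\sigma_1(f)=\ell(f)-4$, so that Euler's formula gives $\sum_v\sigma_1(v)+\sum_f\sigma_1(f)=-8$; every face then has non-negative charge and every vertex of degree at least~$5$ has positive charge. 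We partition $V(G)$ into a set $X$ of vertices from which the fire can be contained losing at most an absolute constant $c_0$ of vertices, and $Y=V(G)\setminus X$, on which the firefighters do nothing, and aim to prove $|Y|\le c\,|X|$ for an absolute constant $c$; this yields $\rho_2(G)\ge\frac{n-c_0}{n}\cdot\frac{|X|}{|X|+|Y|}\ge\frac1{1+c}\cdot\frac{n-c_0}{n}$, and the claimed bound follows after handling graphs with few vertices directly.

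Next I would collect the containment strategies that populate $X$. A vertex of degree at most~$2$ lies in $X$ trivially. A vertex $v$ of degree~$3$ lies in $X$ whenever there is a bounded-length path $v=v_0,v_1,\dots,v_\ell$ with $d(v_i)=4$ for $1\le i\le\ell-1$ and $d(v_\ell)\le3$: protecting two neighbours of $v_0$, then two of $v_1$, and so on, keeps the fire a single advancing front that is pinched off at $v_\ell$, losing only $\ell+1$ vertices. The genuinely new point compared with the girth-$5$ case is that a triangle-free planar graph may be a near-quadrangulation, in which almost all vertices have degree~$4$ and charge~$0$ and almost all faces have length~$4$ and charge~$0$; hence the argument can succeed only if \emph{most degree-$4$ vertices also lie in $X$}. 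I would therefore show that a degree-$4$ vertex $v$ lies in $X$ whenever the embedding exhibits, next to a $4$-face incident with $v$, a short chain of low-degree vertices: once two of $v$'s neighbours are protected the fire splits onto the other two, $c$ and $d$, but these share the fourth vertex $z$ of the $4$-face $vczd$, so protecting $z$ together with one further vertex per step reunites and extinguishes the two fronts within $O(1)$ steps, provided the degrees along the chain stay bounded; symmetric variants handle the remaining small-degree cases. Vertices of degree~$3$ or~$4$ covered by none of these strategies form sets $Y_3,Y_4\subseteq Y$, and all vertices of degree at least~$5$ are placed in $Y$.

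With $X$ fixed, I would devise discharging rules that send the surplus charge of vertices of degree at least~$5$ and of faces of length at least~$5$ towards the ``bad'' vertices of $Y_3\cup Y_4$, so that afterwards every vertex of $Y$ has charge at least a fixed $\alpha>0$, every vertex of $X$ has charge at least $-c_1$ for a fixed $c_1$, and every face has non-negative charge. The structural input that makes this possible is that, by the containment lemmas, a vertex of $Y_3$ or $Y_4$ can fail to be containable only because it is ``blocked'' by a vertex of degree at least~$5$ or a face of length at least~$5$ within bounded distance; so there is always enough surplus nearby, and, being nearby, it is shared among boundedly many recipients, so the rules can be made to balance. Verifying the resulting inequalities, degree by degree and configuration by configuration, fixes admissible values of $\alpha$, $c_1$ and the discharging radius. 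Then $-8=\sum_v\sigma_2(v)+\sum_f\sigma_2(f)\ge -c_1\,|X|+\alpha\,|Y|$ gives $|Y|\le(c_1/\alpha)\,|X|$; substituting into the estimate above, and noting that a graph on $2\le n<N$ vertices still satisfies $\rho_2(G)\ge2/N$ (one can always save two vertices other than the firestart) for the threshold $N$ produced by the bookkeeping, one obtains $\rho_2(G)\ge1/723636$.

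The main obstacle is the degree-$4$ analysis. Because the total charge equals $-8$ while degree-$4$ vertices and $4$-faces carry no charge, the discharging has essentially no slack on degree-$4$ vertices: almost all of them must lie in $X$, which forces us to exhibit, for nearly every degree-$4$ vertex of a triangle-free planar graph, a two-firefighter strategy losing only a bounded number of vertices. Choosing the right ``escape structure'' around such a vertex, checking that two protections per step really do suffice to reunite and quench the two fronts created when the fire leaves $v$, and then isolating precisely the local patterns that obstruct this so that they can be charged against nearby vertices of degree at least~$5$ or faces of length at least~$5$, is where the bulk of the work, and the size of the final constant, lies; the degree-$3$ and higher-degree cases and the closing bookkeeping are comparatively routine.
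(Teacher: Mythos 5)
Your overall architecture (edge-maximal triangle-free plane graph, charges $d(v)-4$ and $d(f)-4$ summing to $-8$, a containable set $X$ versus a do-nothing set $Y$, and discharging to show $|Y|\le c\,|X|$) matches the paper, but the step you yourself single out as the crux --- the degree-$4$ vertices --- is exactly where your argument has a genuine gap. Your proposed containment for a degree-$4$ firestart (``protect two neighbours, the two burning vertices $c,d$ share the fourth vertex $z$ of a $4$-face, protect $z$ plus one further vertex per step and the two fronts reunite and are extinguished in $O(1)$ steps'') fails in the critical case of a locally square-grid-like quadrangulation: after protecting $z$ the front already consists of three vertices with seven fresh neighbours, and with two firefighters per step the front keeps growing under any such greedy scheme. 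That two firefighters per round \emph{can} contain a fire in a rectangular grid, losing at most $18$ vertices, is a genuinely nontrivial wall-building strategy due to Fogarty~\cite{Fog03}, and the paper does not try to prove it locally by hand; it invokes it to establish Observation~\ref{obs:4}: every non-containable degree-$4$ vertex has, within distance $7$, either a vertex of degree $\ne 4$ or a face of degree at least $5$. Your sketch neither supplies a working grid strategy nor cites one, so the claim that ``most degree-$4$ vertices lie in $X$'' (or, in your variant, that every $Y_4$ vertex is blocked by a nearby irregularity) is unsupported at its core.

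A second, related gap is your assertion that an uncontainable vertex of degree $3$ or $4$ is always blocked ``by a vertex of degree at least~$5$ or a face of length at least~$5$'' nearby, i.e.\ by an element with \emph{surplus} charge. The paper proves no such thing for $Y_4$: the irregular element guaranteed by Observation~\ref{obs:4} may be a vertex of degree $2$ or $3$, whose charge is negative; the argument still works only because charge $\alpha$ is routed from that element anyway and $X$-vertices are merely required to stay above a fixed negative constant. Your weaker bookkeeping could be repaired the same way, but as stated the ``surplus is always nearby'' claim is both unproven and apparently false. Finally, for $Y_3$ the balancing is not ``comparatively routine'': the paper needs six containment configurations (Observation~\ref{obs:3}) and several structural claims (Claims~\ref{cl:35}--\ref{cl:7}, including the special subsets $Y_{3,2}$ and $Y_{5,3}$) with delicately chosen charge fractions $1/3,\,2/5,\,1/10,\,1/2$ to make every $Y_3$ vertex end up positive; without some such case analysis the discharging does not close.
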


\begin{proof}
  For a star $K_{1,n-1}$, $n\ge2$, we have $\rho_2(K_{1,n-1})\ge1/2$, so we
  can assume~$G$ is not a star.

  Next we can assume that $G$ is edge-maximal with the property of being
  triangle-free and planar, since adding edges to the graph can only make
  things more difficult for the firefighters. As a consequence it is not
  difficult to see that $G$ is connected and, using the assumption that~$G$
  is not a star, in fact~$G$ has no cut-vertex. This means the minimum
  degree is at least~2.

  We assume some fixed embedding of~$G$ in the plane. The embedding gives a
  circular order on the neighbours of a vertex. We use this order to talk
  about \emph{consecutive} neighbours. Since $G$ is 2-connected, the
  \emph{degree of a face} (the number of edges in a boundary walk of the
  face) is precisely the number of vertices incident with the face. We use
  $d(f)$ to denote the degree of a face.

  For $2\le d\le4$, let $X_d$ be the set of vertices $v$ of degree~$d$ such
  that if the fire starts at~$v$, the two firefighters have a strategy that
  saves at least $|V(G)|-18$ vertices; the other vertices of degree~$d$
  form the set~$Y_d$. For $d\ge 5$, $Y_d$ is the set of all vertices of
  degree~$d$. We set $x_d=|X_d|$ and $y_d=|Y_d|$,
  $X=\bigcup_{2\le d\le4}X_d$, and $Y=\bigcup_{d\ge2}Y_d$. Observe that
  every vertex~$v$ of degree~2 is in~$X_2$, since placing the firefighters
  on $v$'s neighbours saves all the vertices except $v$. Hence, $Y_2$ is
  empty.

  Two vertices $u$ and $v$ are \emph{4-opposite} if there is a face of
  degree 4 with boundary vertices $u,a,v,b$ in that order, where at least
  one of $a,b$ has degree~4. Two vertices $u$ and $v$ are \emph{4-adjacent}
  if they are adjacent and the two faces incident with the edge $uv$ have
  degree 4.

  We now give some remarks about the set $X_3$.

  \begin{observation}\label{obs:3}
    Every vertex $v$ of degree 3 satisfying at least one of the following
    properties is in $X_3$:

    \medskip\noindent
    3.1\quad$v$ is adjacent to a vertex of degree at most 3;

    \smallskip\noindent
    3.2\quad$v$ is adjacent to a vertex of degree 4 having another
    neighbour of degree at most 3;

    \smallskip\noindent
    3.3\quad$v$ is 4-opposite to a vertex of degree at most 4;

    \smallskip\noindent
    3.4\quad$v$ is 4-opposite to a vertex of degree 5 having a neighbour of
    degree at most 3;

    \smallskip\noindent
    3.5\quad$v$ is adjacent to a vertex $w$ of degree 5 having three
    consecutive neighbours of degree at most~3 (including $v$), and such
    that the middle vertex from these three neighbours of~$w$ is 4-adjacent
    to~$w$;

    \smallskip\noindent
    3.6\quad$v$ is 4-adjacent to a vertex of degree 6 that is 4-adjacent to
    6 vertices of degree at most~3.
  \end{observation}

  \noindent
  To see this, consider Figures~\ref{fig:c3.123}, \ref{fig:c3.4},
  and~\ref{fig:c3.56}. In each figure, the strategy of the firefighters is
  described in the same way we did for Figure~\ref{fig:gridhex} (for
  instance, the firestart $v$ is the squared vertex labelled 0). The
  degrees of the relevant vertices are indicated next to those vertices.
  Note that the exact order of the neighbours around the vertex of degree~4
  and~5 in configurations~3.2 and~3.4, respectively, is not relevant and
  does not influence the strategy.\eop

  \begin{figure}[ht]
    \centering
    \raisebox{2.5mm}{\includegraphics[scale=0.6]{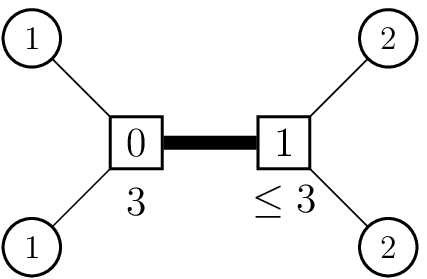}}
    \hspace{1cm}
    \includegraphics[scale=0.6]{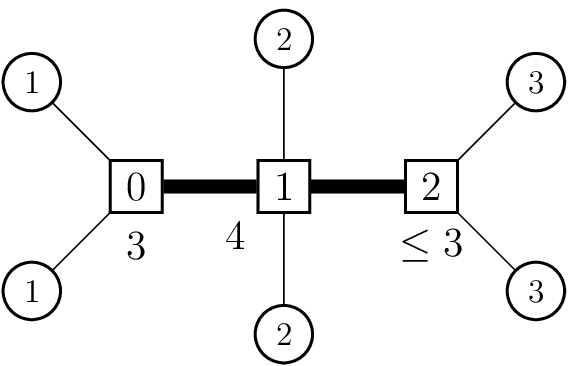}
    \caption{Configurations 3.1 (left) and 3.2 (right).}
    \label{fig:c3.123}
  \end{figure}

  \begin{figure}[ht]
    \centering
    \includegraphics[scale=0.6]{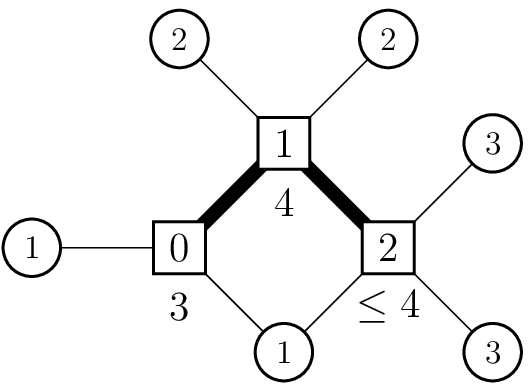}
    \hspace{1cm}
    \includegraphics[scale=0.6]{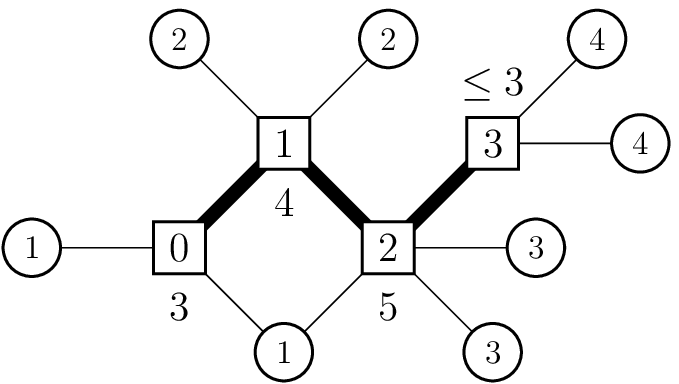}    
    \caption{Configurations 3.3 (left) and 3.4 (right).}
    \label{fig:c3.4}
  \end{figure}

  \begin{figure}[ht]
    \centering
    \raisebox{3mm}{\includegraphics[scale=0.6]{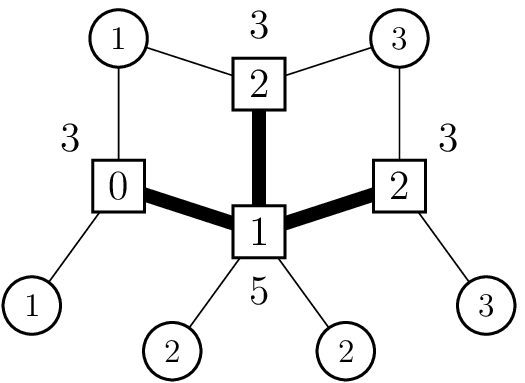}}
    \hspace{1cm}
    \raisebox{3mm}{\includegraphics[scale=0.6]{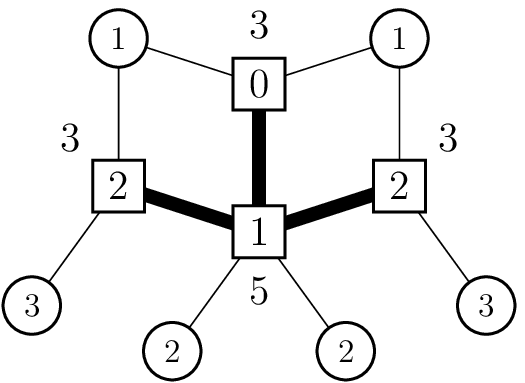}}
    \hspace{1cm}
    \includegraphics[scale=0.5]{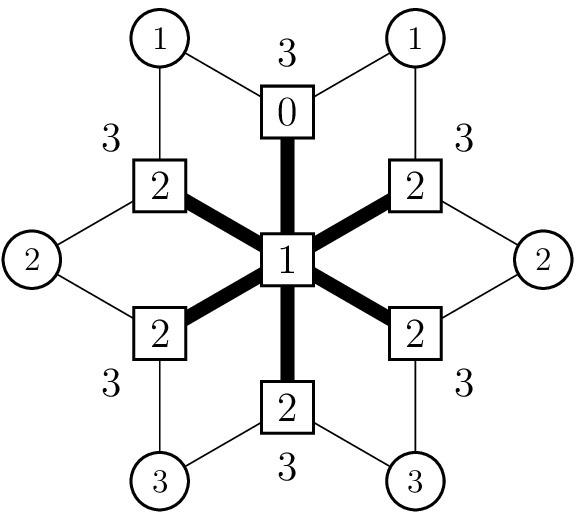}
    \caption{Configurations 3.5 (left and centre) and 3.6 (right).}
    \label{fig:c3.56}
  \end{figure}

  \bigskip\noindent
  Using these observations, we now derive some useful properties of the
  vertices in $Y_3$. We need a few more definitions. An \emph{element} is a
  vertex or a face. An element is \emph{contiguous} with a vertex $v$, if
  it is either a face that is incident with $v$, or a vertex that is
  4-opposite or 4-adjacent to $v$. Two vertices $u$ and $v$ are
  \emph{5-adjacent} if they are adjacent and exactly one of the two faces
  incident with the edge $uv$ has degree 4.

  \begin{claim}\label{cl:35}
    For any $v\in Y_3$, at least two elements of degree at least 5 are
    contiguous with~$v$. Moreover, if there are only two such elements,
    then $v$ is 5-adjacent to two more vertices of degree at least 5
    and~$v$ is incident with at least one face of degree at least~5 and at
    least one face of degree~4.
  \end{claim}

  \noindent
  By 3.1 of Observation~\ref{obs:3}, all the neighbours of a vertex
  $v\in Y_3$ have degree at least 4. If~$v$ is adjacent to a vertex $u$ of
  degree 4, then, by 3.3 of Observation~\ref{obs:3}, for each face~$f$
  incident with $uv$, either $d(f)\ge 5$ or there exists a vertex $w$
  incident with $f$ such that $d(w)\ge 5$ and $v$ and $w$ are 4-opposite.
  So if $v$ is incident to zero or three faces of degree 4, then $v$ is
  contiguous with at least three elements of degree at least~5, and the
  claim holds.

  If $v$ is incident to only one face of degree~4, then it is contiguous
  with two faces of degree at least~5. Moreover, by the above, either $v$
  is 4-opposite (and so, contiguous) to a vertex of degree at least~5, or
  it is 5-adjacent to two vertices of degree at least~5.

  Finally, assume that $v$ is incident to precisely two faces of degree 4.
  Then it is contiguous with a face $f$ of degree at least 5. Let $u$ be
  the neighbour of $v$ such that $uv$ is not incident with $f$. By the
  above, if $d(u)=4$, then $v$ is 4-opposite to two vertices of degree at
  least~5. So in this case it is contiguous with at least three elements of
  degree at least~5. If $d(u)\ge 5$, then, since $u$ and $v$ are
  4-adjacent, $v$ is contiguous with two elements of degree at least~5. By
  the remark above, if no other element of degree at least~5 is contiguous
  with $v$, then the two neighbours of $v$ distinct from $u$ must have
  degree at least~5, which completes the proof of the claim.\eop

  \bigskip\noindent
  We denote by $Y_{3,2}$ the vertices in~$Y_3$ that are only contiguous
  with two elements of degree at least 5.

  \begin{claim}\label{cl:53}
    For any $v\in Y_5$, at most three vertices of $Y_3$ are contiguous with
    $v$. Moreover, if three vertices of $Y_3$ are contiguous with $v$, they
    consist of three non-consecutive neighbours of $v$, and all the faces
    incident with $v$ have degree 4.
  \end{claim}

  \noindent
  We first observe that by 3.4 of Observation~\ref{obs:3}, a vertex
  $v\in Y_5$ cannot be both adjacent to a vertex $u\in Y_3$ and 4-opposite
  to a vertex $w\in Y_3$ (since otherwise $w$ would be in $X_3$ by
  definition). Assume that $v$ is 4-opposite to two consecutive
  vertices~$x$ and $y$ of $Y_3$ (\emph{i.e.}, the faces that $v$ shares
  with $x$ and $y$ are consecutive with respect to $v$). By 3.1 and 3.2 of
  Observation~\ref{obs:3}, the common neighbour $z$ of $v,x,y$ cannot have
  degree less than~5, so the two neighbours of $v$ distinct from $z$, but
  adjacent to $x$ or $y$, have degree~4. The situation, together with a
  strategy for the firefighters in the case a fire starts at~$x$, is
  depicted in Figure~\ref{fig:35cons}.
  \begin{figure}[ht]
    \centering
    \includegraphics[scale=0.7]{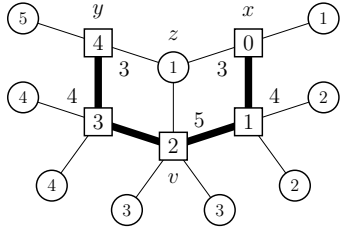}
    \caption{A vertex $v\in Y_5$ with two consecutive 4-opposite vertices
      $x,y\in Y_3$.}
    \label{fig:35cons}
  \end{figure}
  This contradicts $x\in Y_3$. It follows that $v$ cannot be 4-opposite to
  three vertices of $Y_3$. Assume now that~$v$ is adjacent to some vertices
  of $Y_3$. By 3.5 of Observation~\ref{obs:3}, there cannot be three
  vertices of $Y_3$ that are 4-adjacent to $v$ and consecutive around $v$.
  Moreover, if three non-consecutive neighbours of $v$ are 4-adjacent to
  $v$, then all the faces incident with $v$ have degree 4, which concludes
  the proof of Claim~\ref{cl:53}.\eop

  \bigskip\noindent
  We denote by $Y_{5,3}$ the vertices in~$Y_5$ that are 4-adjacent to three
  vertices of $Y_3$. By the previous claim, we see that all the faces
  incident with a vertex of~$Y_{5,3}$ have degree 4, but a vertex of $Y_3$
  cannot be 4-opposite to a vertex of~$Y_{5,3}$.

  \begin{claim}\label{cl:353}
    Every vertex in $Y_3$ is 4-adjacent to at most one vertex of $Y_{5,3}$.
  \end{claim}

  \noindent
  Assume the claim is false, and some vertex $v in Y_3$ is 4-adjacent to
  two vertices of $Y_{5,3}$. By Claim~\ref{cl:53}, only four cases need to
  be considered. In each of the cases, there is a strategy for the
  firefighters for a fire that starts at one of the vertices from~$Y_3$ and
  saves at least $|V(G)|-7$ vertices; see Figure~\ref{fig:y53}. But this
  contradicts the definition of a vertex in~$Y_3$.\eop

  \begin{figure}[ht]
    \centering
    \includegraphics[scale=0.5]{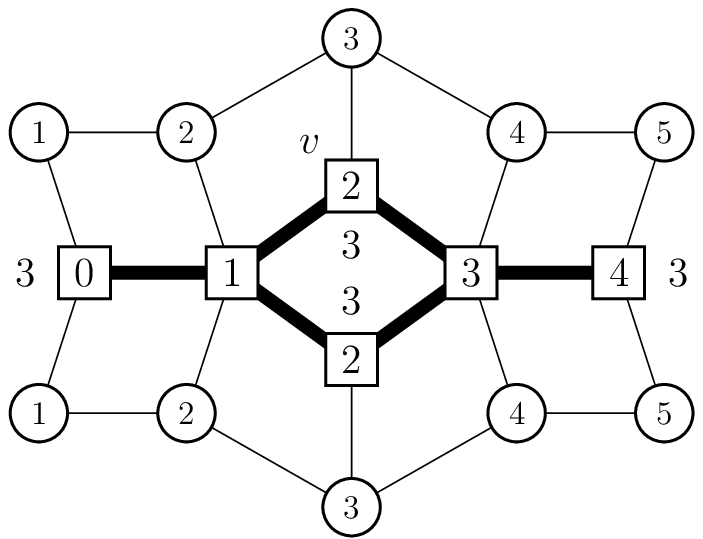}\hspace{0.4cm}
    \includegraphics[scale=0.5]{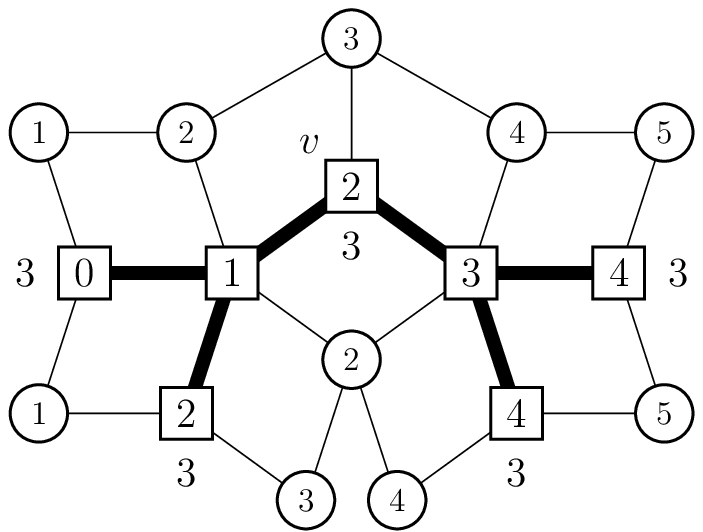}\hspace{0.4cm}
    \includegraphics[scale=0.5]{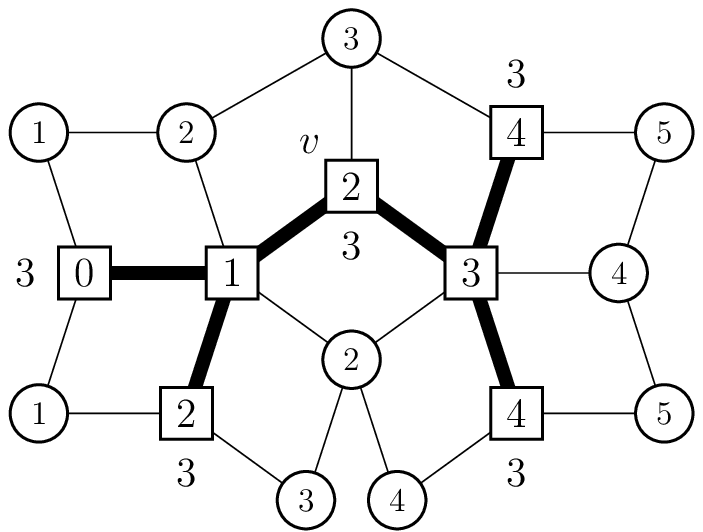}\hspace{0.4cm}
    \includegraphics[scale=0.5]{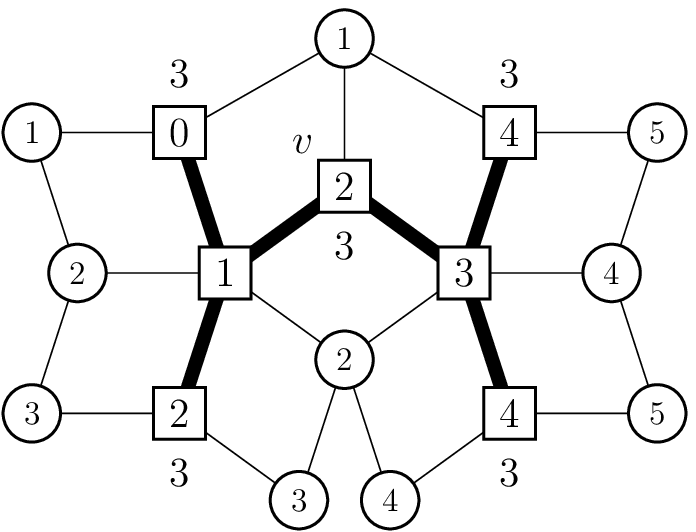}
    \caption{A vertex $v\in Y_3$ with two neighbours in $Y_{5,3}$.}
    \label{fig:y53}
  \end{figure}

  \begin{claim}\label{cl:6}
    If $v\in Y_6$, then at most six vertices of $Y_3$ are contiguous with
    or 5-adjacent to~$v$. Moreover, if there are six such vertices, $v$ is
    5-adjacent to at least two of them.
  \end{claim}

  \noindent
  By 3.1 of Observation~\ref{obs:3}, if $v$ is 4-opposite to some vertex
  $u\in Y_3$, then the (at least two) common neighbours of $v$ and $u$ have
  degree at least 4, so they are not in $Y_3$. Assume that six or more
  vertices of~$Y_3$ are contiguous with or 5-adjacent to $v$. By the remark
  above, this can only happen if either~$v$ is 4-opposite to six vertices
  of $Y_3$, which contradicts 3.2 of Observation~\ref{obs:3}, or $v$ is
  4-adjacent or 5-adjacent to six vertices of $Y_3$. In the latter case, by
  3.6 of Observation~\ref{obs:3}, $v$ is not allowed to be 4-adjacent to
  six vertices of~$Y_3$, so~$v$ must be incident with a face of degree at
  least~5. Since not all faces can have degree at least~5 (since then~$v$
  cannot be 4-adjacent or 5-adjacent to any vertex), $v$ is 5-adjacent to
  at least two vertices of $Y_3$.\eop

  \bigskip\noindent
  The argument above involving 3.1 has the following consequence in
  general.

  \begin{claim}\label{cl:7}
    If $v\in Y_d$ with $d\ge 7$, at most $d$ vertices of $Y_3$ are
    contiguous with or 5-adjacent to $v$.
  \end{claim}

  \noindent
  We finish this part with one observation regarding vertices in~$Y_4$.

  \begin{observation}\label{obs:4}
    For every vertex $v\in Y_4$, there is a path of length at most 7
    connecting~$v$ and a vertex $u$ such that either $d(u)\ne4$, or a face
    incident with $u$ and its neighbour on the path has degree at least~5.
    Moreover, all the internal vertices on the path and the faces incident
    with $v$ or with two internal vertices have degree precisely 4.
  \end{observation}

  \noindent
  Assume this is not the case. Then the subgraph of $G$ induced by the
  vertices at distance at most 7 from $v$ is the induced subgraph of a
  rectangular grid. In this case Fogarty~\cite{Fog03} gave a strategy
  saving all the vertices except at most 18 when the fire starts at $v$
  (see Figure~\ref{fig:grid}). This contradicts the fact that
  $v\in Y_4$.\eop

  \begin{figure}[ht]
    \centering
    \includegraphics[scale=0.8]{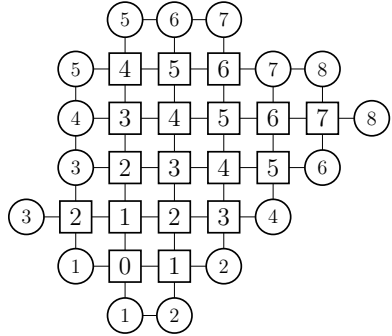}
    \caption{A strategy saving at least $n-18$ vertices if the
      neighbourhood of the firestart is a rectangular
      grid.\label{fig:grid}}
  \end{figure}

  \bigskip\noindent
  We continue as in the proof of Theorem~\ref{th:pla}. We assign a charge
  $\nu_1(v)=d(v)-4$ to each vertex $v\in V$, and a charge $\nu_1(f)=d(f)-4$
  to each face $f\in F$. Euler's formula gives
  $\sum\limits_{v\in V}\nu_1(v)+\sum\limits_{f\in F}\nu_1(f)=-8$. We
  redistribute this initial charge according to the following rules. Here
  the values of~$\alpha$ and $\beta$ will be determined later.

  \medskip\noindent
  (S1)\quad A vertex in~$Y_{5,3}$ gives a charge of $1/3-\beta$ to each of
  its three neighbours from~$Y_{3}$.

  \smallskip\noindent
  (S2)\quad A vertex of degree at least 5 not in $Y_{5,3}$ gives a charge
  of $2/5-\beta$ to each vertex in~$Y_3$ it is contiguous with.

  \smallskip\noindent
  (S3)\quad A vertex of degree at least~5 gives a charge of $1/10-\beta$ to
  each vertex in~$Y_3$ it is 5-adjacent to.

  \smallskip\noindent
  (S4)\quad Each face of degree at least~5 gives a charge of $1/2-\beta$ to
  each vertex in~$Y_3$ it is incident with.

  \smallskip\noindent
  (S5)\quad For each vertex $v\in Y_4$ we choose a vertex~$u$ with
  $\mathop{dist}(v,u)\le7$ according to Observation~\ref{obs:4}, such that
  $\mathop{dist}(v,u)$ is minimal. If $d(u)\ne4$, this vertex~$u$ gives a
  charge~$\alpha$ to~$v$. Otherwise, $u$ is incident to a face of degree at
  least 5, which is at distance at most 6 from $v$; in that case this face
  gives a charge~$\alpha$ to~$v$.

  \medskip
  The charge obtained after applying rules (S1)\,--\,(S4) is denoted by
  $\nu_2(x)$, $x\in V\cup F$. Note that we have
  $\sum\limits_{v\in V}\nu_2(v)+\sum\limits_{f\in F}\nu_2(f)=
  \sum\limits_{v\in V}\nu_1(v)+\sum\limits_{f\in F}\nu_1(f)=-8$.
  
  \begin{claim}\label{cl:chb}
    Let $\alpha=1/360720$ and $\beta=2186\,\alpha$. Then for every $v\in X$
    we have $\nu_2(v)\ge-2-\beta$; for every $v\in Y$ we have
    $\nu_2(v)\ge\alpha$; and for every face $f$, $\nu_2(f)\ge 0$.
  \end{claim}

  \noindent
  To prove the claim, we first estimate how often a vertex~$v$ with
  $d(v)\ne4$ can give a charge~$\alpha$ according to~(S5). As a very crude
  upper bound, this is at most the number of paths of length at most~7,
  starting in~$v$, and whose internal vertices all have degree exactly~4.
  This number is clearly at most
  $d(v)\cdot(1+3+3^2+\cdots+3^6)=1093\,d(v)$. Hence, each vertex of degree
  $d$ gives at most $1093\,d\,\alpha=\frac12\,d\,\beta$ according to~(S5).

  It follows that a vertex~$v$ of degree~2 satisfies
  $\nu_2(v)\ge\nu_1(v)-\beta=-2-\beta$. Similarly, for a vertex $v\in X_3$
  we have $\nu_2(v)\ge-1-\frac32\,\beta\ge-2-\beta$, since $\beta\le2$. And
  for $v\in X_4$ we have $\nu_2(v)=\nu_1(v)=0$.

  Let $v\in Y_{3}$. Suppose first that $v$ is contiguous with three
  elements of degree at least~$5$. By Claim~\ref{cl:353}, at most one of
  them is in $Y_{5,3}$, so $v$ receives at least
  $1/3+2\times2/5-3\,\beta=17/15-3\,\beta$ by rules~(S1), and~(S2).
  Otherwise, by Claim~\ref{cl:35}, $v$ is only contiguous with two elements
  of degree at least 5, i.e., $v\in Y_{3,2}$. In this case we know that $v$
  is incident to a face~$f$ of degree at least~$5$ and 5-adjacent to two
  vertices of degree at least 5. Hence $v$ receives at least
  $1/2+1/3-2\,\beta$ from $f$ and the second element of degree at least
  five it is contiguous with (by rules~(S4), and~(S1) or~(S2)), and at
  least $2\times (1/10-\beta)$ by rule~(S3). So, in both cases $v$ receives
  at least $31/30-4\,\beta$. Since it gives at most $\frac32\,\beta$
  according to~(S5), we obtain $\nu_2(v)\ge 1/30-\frac{11}2\,\beta$. Using
  that $\alpha=1/360720$ and $\beta=2186\,\alpha$, this implies
  $\nu_2(v)\ge\alpha$.

  From rule~(S5) it follows that if $v\in Y_4$, then
  $\nu_2(v)=\nu_1(v)+\alpha=\alpha$.

  Next consider a vertex $v\in Y_{5,3}$. This gives three times a charge
  $1/3-\beta$ according to~(S1), no charge according to (S3) by
  Claim~\ref{cl:53}, and at most $5\times\frac12\,\beta$ according to~(S5).
  Since $\nu_1(v)=1$, we find
  $\nu_2(v)\ge1-3\times(1/3-\beta)-\frac52\,\beta=\frac12\,\beta$.

  Consider now a vertex $v\in Y_5\setminus Y_{5,3}$. By Claim~\ref{cl:53},
  $v$ is contiguous with at most two vertices from~$Y_3$. Recall that by
  3.4 of Observation~\ref{obs:3}, $v$ cannot be 4-opposite to a vertex
  of~$Y_3$ and adjacent to a vertex of $Y_3$. If $v$ is not contiguous with
  any vertex of degree at least~5, it gives at most $5\times(1/10-\beta)$
  according to~(S3). If $v$ is contiguous with one vertex of~$Y_3$, then it
  gives $2/5-\beta$ according to (S2) and at most $4\times(1/10-\beta)$
  according to~(S3). Suppose now that $v$ is contiguous with exactly two
  vertices from $Y_3$, say $u$ and~$w$. If one of them in 4-opposite
  to~$v$, then $v$ gives $2\times(2/5-\beta)$ only. Otherwise, $v$ is
  4-adjacent with $u$ and $w$. If $v$ is 5-adjacent to its three other
  neighbours, then we end up with configuration~3.5 of
  Observation~\ref{obs:3}, a contradiction. Hence $v$ can only be
  5-adjacent to two more vertices, and it gives $2\times(2/5-\beta)$
  according to (S2) and at most $2\times(1/10-\beta)$ according to (S3). In
  all cases, $v$ gives at most $1-4\,\beta$. Since $v$ gives at most
  $5\times\frac12\,\beta$ according to (S5), we have
  $\nu_2(v)\ge1-(1-4\,\beta)-\frac52\,\beta=\frac32\,\beta$.
  
  For a vertex $v\in Y_6$, we have by Claim~\ref{cl:6} that it gives at
  most five times a charge according to~(S2). Moreover, if it gives
  precisely $5\times(2/5-\beta)$, then it is not 5-adjacent to any vertex
  of $Y_3$. Otherwise, it gives at most
  $4\times(2/5-\beta)+2\times(1/10-\beta)$. Hence, $v$ gives at most
  $\max\{2-5\,\beta,9/5-6\,\beta\}=2-5\,\beta$. Finally, $v$ also gives at
  most $6\times\frac12\,\beta$ according to~(S5). As $\nu_1(v)=2$, we
  obtain $\nu_2(v)\ge2-(2-5\,\beta)-3\,\beta=2\,\beta$.

  For a vertex~$v$ with $d(v)\ge7$, we can estimate, using Claim~\ref{cl:7}
  and the observations above:
  \[\nu_2(v)\:\ge\:(d(v)-4)-d(v)\cdot(2/5-\beta)-
  d(v)\cdot\tfrac12\,\beta\:=\:
  d(v)\cdot(3/5+\tfrac12\,\beta)\:\ge\:\tfrac72\,\beta.\]

  We now estimate how often a face $f$ can give a charge~$\alpha$ according
  to~(S5). This is at most the number of paths of length at most~6,
  starting at a vertex $u$ of degree 4 incident with~$f$, and whose
  internal vertices (and faces incident with them) all have degree
  exactly~4. This number is at most
  $2\,d(f)\cdot(1+3+3^2+\cdots3^5)=728\,d(f)$, since such a vertex $u$ has
  at most two neighbours such that the faces incident with those neighbours
  all have degree 4. Using that $\beta=2186\,\alpha$, this implies that
  each face of degree $d$ gives at most
  $728\,d\,\alpha\le{\tfrac25}\,d\,\beta$ according to (S5).

  By 3.1 of Observation~\ref{obs:3}, a face $f$ has at most
  $\bigl\lfloor\tfrac12\,d(f)\bigr\rfloor$ vertices of $Y_3$ on its
  boundary. Hence, for a face~$f$ of degree~5, we have
  $\nu_2(f)\ge1-2\times(1/2-\beta)-5\times\frac25\,\beta=0$. For a face $f$
  of degree at least 6, we obtain
  \[\nu_2(f)\:\ge\:(d(f)-4)-\tfrac12\,d(f)\cdot(1/2-\beta)-
  d(f)\cdot\tfrac25\,\beta\:=\:
  d(f)\cdot(3/4+\tfrac1{10}\,\beta)-4\:\ge\:\tfrac35\,\beta.\]

  Finally, for a face $f$ of degree 4, we have $\nu_2(f)=\nu_1(f)=0$.

  Putting the inequalities together, we see that $\nu_2(v)\ge-2-\beta$ for
  all $v\in X$; $\nu_2(v)\ge\min\{\alpha,\frac12\,\beta\}=\alpha$ for all
  $v\in Y$; and $\nu_2(f)\ge 0$ for any face $f$, completing the proof of
  the claim.\eop
  
  \bigskip\noindent
  The claim means that
  $-8=\sum\limits_{v\in V}\nu_2(v)+ \sum\limits_{f\in F}\nu_2(f)\ge
  (-2-\beta)\,|X|+\alpha\,|Y|$. This gives
  $|Y|<(2186+2/\alpha)\,|X|=723626\,|X| $. So the surviving rate of a graph
  on $n=|X|+|Y|$ vertices with this strategy is at least
  \[\frac{n-18}{n}\cdot\frac{|X|}{|X|+|Y|}\:>\:
  \frac{n-18}{n}\cdot\frac{|X|}{723627\,|X|}\:=\:
  \frac{n-18}{723627\,n}\hskip1pt.\]
  So if $n\ge1447272$, the surviving rate is at least 1/723636. On the
  other hand, if $2\le n<1447272$, then we still can save at least
  $\min\{2,n-1\}$ vertices, hence the surviving rate in that case is still
  at least $2/1447272=1/723636$.
\end{proof}

\noindent
Again, we have made no attempts to optimise the constants, in order to
concentrate on making the exposition as clear as possible. For instance,
the estimates for the number of times a vertex or a face gives a
charge~$\alpha$ according to rules~(S5) can be improved significantly with
a more careful analysis.

Theorem~\ref{th:bip} together with the graph $K_{2,n}$ imply
Theorem~\ref{th:total}\,(2).

\section{Conclusion}

Regarding the firefighter number of planar graphs, we conjecture the
following.

\begin{conjecture}\mbox{}\\*
  For the class $\cP$ of planar graphs, we have $\mathit{ff}(\cP)=2$.
\end{conjecture}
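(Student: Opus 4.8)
\medskip
\noindent
\textbf{A possible approach to the conjecture.}\quad
Since $\mathit{ff}(\cP)\ge2$ already follows from the graphs $K_{2,n}$, the conjecture reduces to showing that with only two firefighters the surviving rate of every planar graph stays above a positive constant, \emph{i.e.}\ that $\rho_2(\cP)>0$. The plan is to push the ``partition into $X$ and $Y$, then discharge'' scheme of Theorems~\ref{th:pla} and~\ref{th:bip} all the way down to the full class $\cP$ and the budget of two firefighters per step. As before one reduces to a maximal planar graph, so that $G$ is a triangulation of minimum degree at least~3, and assigns the vertex charge $\sigma_1(v)=d(v)-6$, whose sum over $V(G)$ equals $-12$.

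The real content is to assemble, in the spirit of Observation~\ref{obs:3} and the accompanying figures, a catalogue of local configurations around a vertex~$v$ guaranteeing that two firefighters can confine a fire started at~$v$ while losing only a bounded number of vertices; the vertices admitting such a configuration form~$X$, the remaining ones form~$Y$. With two firefighters the elementary move is to protect two vertices, let the fire advance to the rest of its current boundary, and iterate, so confinement works exactly when the fire front can be whittled down two vertices at a time. Hence $X$ should contain, for example, every degree-3 vertex whose third neighbour has degree at most~$5$, and more generally every vertex all of whose escape routes pass through a short corridor of low-degree vertices that can be sealed two at a time (locally grid-like neighbourhoods being dealt with by an explicit pinching strategy, as in Figure~\ref{fig:gridhex}). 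One then designs discharging rules that move charge from the ``bad'' vertices of~$Y$ — which the catalogue forces to see many high-degree vertices, or to lie in such grid-like regions — to the low-degree vertices of~$X$, tuned so that $\sigma_2(v)$ stays below a fixed constant on~$X$ and at least $\alpha>0$ on~$Y$. As in the two proofs above this gives $|Y|\le c\,|X|$, hence $\rho_2(G)\ge\epsilon$ for large~$n$, with the small graphs handled directly.

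The main obstacle is that two firefighters are an extremely tight budget on triangulations, where girth~$3$ leaves the least room to manoeuvre: already a degree-5 vertex wedged between two vertices of degree at least~$5$ can start a front that refuses to contract, so a naive catalogue leaves $Y$ far too large to be charged away — one really needs the good configurations to occur linearly often, not just once. Overcoming this will likely require configurations reaching several steps into the graph (the analogue of Observation~\ref{obs:4} in the triangle-free case) together with discharging rules that transport charge along bounded-length paths of degree-6 vertices, the delicate point being to control how often any single vertex must donate. It is also quite possible that the ``do nothing when the fire starts in $Y$'' shortcut is no longer affordable and one must spend a constant number of protections on every $Y$-fire, or that one must bring in a genuinely global consequence of planarity — such as the scarcity of vertices of very high degree — rather than relying on local discharging alone; finding the right combination of these ingredients, and verifying that the resulting two-firefighter strategies actually succeed, is where the difficulty lies.
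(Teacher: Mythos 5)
This statement is a \emph{conjecture} in the paper: the authors do not prove $\mathit{ff}(\cP)=2$, and in their concluding section they explicitly argue that the $X$/$Y$-partition-plus-discharging technique of Theorems~\ref{th:pla} and~\ref{th:bip} is ``too local'' to yield it. Your proposal is an outline of exactly that technique, with the load-bearing parts --- the catalogue of two-firefighter configurations, the discharging rules, and the verification that $|Y|\le c\,|X|$ --- left unspecified, so it is not a proof but a research plan; and, as stated, the plan runs into a concrete obstruction. After reducing to a maximal planar graph, consider a triangulation that consists almost entirely of a large patch of the triangular lattice (every vertex of degree~6 out to a huge radius). If the fire starts well inside such a patch, two firefighters cannot save all but a bounded number of vertices by any bounded-radius strategy: the perimeter of the burning region grows by roughly six vertices per step while only two protections per step are available, so the fire cannot be sealed off in a constant number of steps. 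Consequently no finite catalogue of local configurations can place these vertices in $X$; they all land in $Y$, which then comprises almost all of $V(G)$, and the desired inequality $|Y|\le c\,|X|$ is unattainable. This is precisely the hexagonal-grid objection the paper raises.

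Your last sentence correctly senses this --- that one may have to abandon ``do nothing on $Y$'' and invoke something global --- but no mechanism is offered. Any successful proof must save a \emph{linear} (though not co-bounded) number of vertices when the fire starts in such grid-like regions, for instance by protecting, over many steps, a small separator that the fire has not yet reached, in the spirit of the paper's closing remark on $\epsilon$-separators; designing and analysing such non-local strategies, and combining them with the local catalogue for the remaining vertices, is the open problem, and it is not addressed by the proposal.
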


\noindent
We believe that the proof of Theorem~\ref{th:pla} can be modified to prove
that for some $\epsilon>0$, every planar graph $G$ satisfies
$\rho_3(G)\ge\epsilon$, which would give $\mathit{ff}(\cP_5)\le3$. The only
difference with our setting is that only three firefighters are available
at the first round (instead of four\,\footnote{\,It was pointed out by one
  of the referees that adding one firefighter at the first round can have
  significant implications in terms of complexity. In the usual setting
  (only one firefighter at each round), deciding whether a given number of
  vertices of a rooted subcubic tree can be saved if the fire starts at the
  root is NP-complete~\cite{FKMR07}. If a second firefighter is available
  at the first round only, the problem becomes trivially polynomial.}). The
main consequence is that not only $Y_4$ is not empty, but also the
structure of~$Y_5$ is more complicated. However, the argument concerning
$Y_6$ will run smoothly using a strategy of Fogarty~\cite{Fog03} in
hexagonal grids.

Nevertheless, we feel the proof technique is too local to lead to a proof
of $\mathit{ff}(\cP)=2$. For instance, the fact that only a constant number
of steps are considered does not allow to design a good strategy (saving a
linear number of vertices in average) with two firefighters in a very large
hexagonal grid.

\medskip
Wang \emph{et al.}~\cite{WFW10} proved that $\mathit{ff}(\cP_9)=1$. The
ideas of the proof of Theorem~\ref{th:bip} can be adapted to prove that
$\mathit{ff}(\cP_8)=1$, since it is not hard to show that a worst case
scenario in this case is if locally, every face close to the firestart has
size 8, and around the boundary of these faces the vertices alternatingly
have degree~2 and~4. (In other words, locally the graph looks like a
subdivided rectangular grid.) In this case, a strategy similar to the
strategy of Fogarty described in Section~\ref{sec:bip} will save at least
$n-63$ vertices. We omit details here, since we believe that such a result
would still be far from optimal. Indeed, we conjecture the following.

\begin{conjecture}\mbox{}\\*
  For the class $\cP_5$ of planar graphs of girth at least 5, we have
  $\mathit{ff}(\cP_5)=1$.
\end{conjecture}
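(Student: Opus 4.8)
The lower bound is immediate: with no firefighters a fire destroys every connected graph, so $\rho_0(G)=0$ and $\mathit{ff}(\cP_5)\ge1$. For the upper bound the plan is to keep the $X$/$Y$ partition paradigm of Theorems~\ref{th:pla} and~\ref{th:bip}, but to combine the local discharging argument with a genuinely global containment strategy, since the purely local approach alone cannot establish the bound, as we explain below.

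As before, reduce to an edge-maximal planar graph $G$ of girth at least~$5$ with $\delta(G)\ge2$, so that every face has degree at least~$5$. With a single firefighter, a fire starting at a vertex of degree~$d$ spreads to $d-1$ vertices after the first protection, so the only way to stop it locally is to \emph{chase} it along an escape path: from a fire front that is a single vertex of degree at most~$3$ with exactly one burnt neighbour, protect one of its (at most two) unburnt neighbours and follow the fire to the remaining one; the chase ends successfully once the front reaches a vertex of degree at most~$2$. Let $X$ be the set of vertices~$v$ admitting such a chase of length at most some absolute constant~$C$ (a fire at $v\in X$ then loses at most $C$ vertices), and $Y=V\setminus X$. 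The first task is an Observation~\ref{obs:3}--Claim~\ref{cl:7} style analysis showing that a vertex of degree~$2$ or~$3$ in $Y$ must be surrounded, within distance~$C$, by many vertices of degree at least~$4$ and many faces of degree at least~$6$. Feeding this into a discharging argument with vertex charges $d(v)-\tfrac{10}{3}$ (or combined charges $d(v)-4$ on vertices and $d(f)-4$ on faces) should give $|Y|\le c\,|X|$ --- but only when $X\ne\varnothing$ in a robust way, and here lies the obstruction: there are $3$-regular planar graphs of girth~$5$ (e.g.\ fullerene graphs) of every sufficiently large order, and for these $X=\varnothing$ while $Y=V$, so no local argument alone can work.

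The second, essential ingredient is a global strategy for fires that cannot be contained locally. Girth at least~$5$ forces $G$ to be ``spread out'' (average degree $<10/3$, and $G$ locally a tree), so one should be able to show that for every vertex~$v$ there is a set $S\subseteq V$ with $|S|\ge\epsilon\,n$, with $\mathop{dist}(v,S)=:d^\ast$ large, and with a set $\partial S$ separating $S$ from $v$ such that $|\partial S|\le d^\ast$. The firefighters then spend the first $d^\ast$ steps building the wall $\partial S$, which is completed before the fire --- travelling at unit speed --- can reach it, thereby saving all of~$S$. One would prove this by applying a planar-separator argument not to $G$ but to the part of $G$ far from~$v$, and, crucially, by finding a ``cap'': a region deep inside the far part whose separating boundary is short \emph{relative to its distance from~$v$} (a whole BFS layer around~$v$ is a closed curve and is typically too long, so one must push the separator to one side). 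One only needs this for $\Omega(n)$ vertices~$v$, each then saving $\Omega(n)$ vertices, so that $\rho_1(G)=\Omega(1)$; the remaining ``bad'' vertices (few, e.g.\ high-degree vertices or global cut vertices) may be ignored.

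The main obstacle is making this global statement precise and uniform over \emph{all} planar graphs of girth at least~$5$: the three requirements on $S$ --- large, far from~$v$, and cheaply separable --- must be reconciled simultaneously, as must the timing constraint that each vertex of $\partial S$ be protected strictly before the fire reaches it. Girth at least~$5$ is indispensable here, as $K_{2,n}$ shows: there every vertex is within distance~$2$ of the whole graph, so no far, large, cheaply separable set exists, consistent with $\mathit{ff}(\cP_4)=2$. This global ingredient is exactly the non-local argument whose absence is noted above, and carrying it out --- in particular handling the intermediate regime between ``many low-degree escape paths'' and ``locally grid-like'' --- appears to require ideas beyond the discharging method used in this paper.
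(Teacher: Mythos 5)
You have not proved the statement, and neither does the paper: this is stated there as a \emph{conjecture}, with no proof offered, so there is no ``paper proof'' to match. Your write-up is an honest research programme, but its decisive step is exactly the open content. The local part (chasing the fire along low-degree escape paths, then discharging to show $|Y|\le c\,|X|$) is in the spirit of Theorems~\ref{th:girth5}--\ref{th:bip}, and your own observation that it collapses on $3$-regular girth-$5$ planar graphs (fullerenes) and on large hexagonal grids is correct --- indeed the paper itself flags large hexagonal grids as the obstruction to its local technique even with \emph{two} firefighters. Everything therefore hinges on your ``global'' ingredient: for $\Omega(n)$ starting vertices $v$, an $\epsilon$-separator $S_v$ with $|S_v|\le\mathop{dist}(v,S_v)$. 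This is precisely the sufficient condition already recorded in the paper's concluding remark ($\rho_1(G)>\epsilon^2$ would follow), and it is not established anywhere, by you or by the paper. You give no argument that such ``caps'' exist uniformly over girth-$5$ planar graphs; in the canonical hard cases the inequality is genuinely in doubt (in an $n$-vertex hexagonal grid, severing a linear-size piece needs a cut of order $\sqrt{n}$, while the distance from a typical --- let alone central --- vertex to any such cut is also of order $\sqrt{n}$, with constants that may go the wrong way), and the standard planar separator theorem gives separators of size $O(\sqrt{n})$ with no control on their distance from $v$.

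Two further points would need care even if the global lemma were available. First, $|\partial S|\le d^\ast$ alone does not suffice: you must schedule the protection so that \emph{each} wall vertex is protected strictly before the fire reaches it, which requires an ordering argument (protect wall vertices in increasing order of distance from $v$ and verify the prefix condition), not just a comparison of totals. Second, your dichotomy ``either many short escape chases or a good cap'' is asserted, not proved; the intermediate regime you mention (neither locally tree-like with low-degree escapes nor globally grid-like) is untreated. So the status is: correct lower bound, plausible plan for the upper bound, but a genuine gap at the one step that makes the conjecture a conjecture.
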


\noindent
We finish with a remark on the connection between separators and
firefighters. For some constant $\epsilon>0$, an
\emph{$\epsilon$-separator} $S$ in a connected graph $G$ is a set of
vertices whose removal yields at least two components of size at least
$\epsilon |V(G)|$. If for $\epsilon |V(G)|$ vertices $v$ of~$G$, there is
an $\epsilon$-separator $S_v$ whose cardinality is at most the distance
between $v$ and~$S_v$, then $\rho_1(G)>\epsilon^2$: by the time a fire
starting at $v$ reaches $S_v$, the single firefighter can protect all the
vertices of $S_v$, so a (linear-sized) component of $G \setminus S_v$ not
containing $v$ will be saved.

\subsection*{Acknowledgement}

The authors thank the anonymous referees for comments and suggestions. We
also would like to thank Jiangxu Kong from Zhejiang Normal University,
China for pointing out an error in an earlier version of the proof of
Theorem~\ref{th:bip}.

\end{document}